\newcommand{\pFq}[5]{\ensuremath{{}_{#1}F_{#2} \biggl[ \genfrac{}{}{0pt}{}{#3}{#4} \biggm| {#5} \biggr]}}
\renewcommand\d{\mathrm{d}}
\let\wt\widetilde
\let\wh\widehat
\newtheorem{theorem}{Theorem}[section]
\newtheorem{lemma}[theorem]{Lemma}
\newtheorem{proposition}[theorem]{Proposition}
\theoremstyle{definition}
\numberwithin{equation}{section}
\begin{document}

\title{A modular supercongruence for $_6F_5$: An~Ap\'ery-like~story}

\author{Robert Osburn, Armin Straub and Wadim Zudilin}

\address{School of Mathematics and Statistics, University College Dublin, Belfield, Dublin 4, Ireland}
\email{robert.osburn@ucd.ie}

\address{Department of Mathematics and Statistics, University of South Alabama, 411 University Blvd N, MSPB 325, Mobile, AL 36688, USA}
\email{straub@southalabama.edu}

\address{Institute for Mathematics, Astrophysics and Particle Physics, Radboud Universiteit, PO Box 9010, 6500 GL Nijmegen, The Netherlands}
\email{w.zudilin@math.ru.nl}

\address{School of Mathematical and Physical Sciences, The University of Newcastle, Callaghan, NSW 2308, Australia}
\email{wadim.zudilin@newcastle.edu.au}

\subjclass[2010]{Primary 11B65; Secondary 33C20, 33F10}
\keywords{Supercongruence, Ap\'ery numbers, Ap\'ery-like numbers, hypergeometric function}

\date{\today}

\begin{abstract}
  We prove a supercongruence modulo $p^3$ between the $p$th Fourier
  coefficient of a weight 6 modular form and a truncated ${}_6F_5$-hypergeometric series.
  Novel ingredients in the proof are the comparison of two
  rational approximations to $\zeta (3)$ to produce non-trivial harmonic sum
  identities and the reduction of the resulting congruences between harmonic sums via a congruence
  between the Ap\'ery numbers and another Ap\'ery-like sequence.
\end{abstract}

\maketitle

\section{Introduction}

There has been considerable recent interest in the study of arithmetic
properties connecting $p$th Fourier coefficients of integral weight modular
forms and truncated hypergeometric series. A motivating example of this
phenomenon is the modular supercongruence \cite{kilbourn-apery06}
\begin{equation}
  \pFq{4}{3}{\frac{1}{2},\, \frac{1}{2},\, \frac{1}{2},\, \frac{1}{2}}{1,\, 1,\, 1}{1}_{p - 1} \equiv a (p) \pmod{p^3},
  \label{wt4}
\end{equation}
where $p$ is an odd prime and $a (n)$ are the Fourier coefficients of the
Hecke eigenform
\begin{equation}
  \eta (2 \tau)^4 \eta (4 \tau)^4 = \sum_{n = 1}^{\infty} a (n) q^n
  \label{eta24}
\end{equation}
of weight $4$ for the modular group $\Gamma_0 (8)$. Here and throughout, $q=e^{2\pi i \tau}$ with $\operatorname{Im}\tau>0$,
$\eta(\tau)=q^{1/24}\prod_{n=1}^\infty(1-q^n)$ is Dedekind's eta function and
\begin{equation*}
  \pFq{{n + 1}}{n}{a_0,\, a_1,\, \dots,\, a_n}{b_1,\, \dots,\, b_n}{z}_{p - 1} = \sum_{k = 0}^{p - 1} \frac{(a_0)_k
   \cdots (a_n)_k}{(b_1)_k \cdots (b_n)_k}  \frac{z^n}{n!},
\end{equation*}
with $(a)_k = a (a + 1) \cdots (a + k - 1)$, is the truncated hypergeometric
series.

Kilbourn's result \eqref{wt4} verifies one of 14 conjectural supercongruences
between truncated ${}_4 F_3$-hypergeometric series (evaluated at $1$)
corresponding to fundamental periods of the Picard--Fuchs differential equation
for Calabi--Yau manifolds of dimension $3$ and the Fourier coefficients of
modular forms of weight $4$ and varying level \cite{rv-cy}. Two more
cases have been proven in \cite{fm-hyp} and \cite{mccarthy-sc}. Moreover,
there is now a general combinatorial framework
\cite{mccarthy-harm}--\cite{mccarthy-hyp} which not only covers these 14
cases, but also the 8 cases in dimensions $1$ and $2$.
In addition, \eqref{wt4} is one of van Hamme's original 13 Ramanujan-type supercongruences (see (M.2) in \cite{vh}).
For further details on this and related topics we refer to \cite{flrst-hyp}, \cite{klmsy}, \cite{oz}, \cite{swisher}.

The purpose of this paper is to observe that a relationship akin to
\eqref{wt4} exists between a truncated ${}_6 F_5$-hypergeometric series and a
modular form of weight $6$. Our main result is the following.

\begin{theorem}
  \label{thm:main}For all odd primes $p$,
  \begin{equation}
    \pFq{6}{5}{\frac{1}{2},\, \frac{1}{2},\, \frac{1}{2},\, \frac{1}{2},\, \frac{1}{2},\,
       \frac{1}{2}}{1,\, 1,\, 1,\, 1,\, 1}{1}_{p - 1} \equiv b (p) \pmod{p^3},
  \label{wt6}
  \end{equation}
  where
  \begin{equation}
    \eta (\tau)^8 \eta (4 \tau)^4 + 8 \eta (4 \tau)^{12}  = \eta(2\tau)^{12}+32\eta(2\tau)^4\eta(8\tau)^8 = \sum_{n = 1}^{\infty} b (n) q^n
  \label{eta144}
  \end{equation}
  is the unique newform in $S_6 (\Gamma_0 (8))$.
\end{theorem}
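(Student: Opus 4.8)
The plan is to adapt the two-step strategy underlying \eqref{wt4}, in which Kilbourn's theorem is tied to $a(p)$ through the Apéry numbers $A(n)=\sum_k\binom{n}{k}^2\binom{n+k}{k}^2$ via the Beukers--Ahlgren--Ono supercongruence $A(\tfrac{p-1}{2})\equiv a(p)\pmod{p^3}$. Writing the left side of \eqref{wt6} as $\sum_{k=0}^{p-1}\binom{2k}{k}^6/2^{12k}$ and noting that $\binom{2k}{k}\equiv0\pmod p$ for $\tfrac{p+1}{2}\le k\le p-1$, the terms with $k>\tfrac{p-1}{2}$ vanish modulo $p^6$, leaving only $0\le k\le\tfrac{p-1}{2}$. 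On this range I would use $\tfrac{p-1}{2}\equiv-\tfrac12\pmod p$ to compare $(1/2)_k/k!$ with the half-integer binomials; the key exact identity is
\[ \binom{(p-1)/2}{k}\binom{(p-1)/2+k}{k}=(-1)^k\frac{(1/2)_k^2}{(k!)^2}\prod_{i=0}^{k-1}\Bigl(1-\frac{p^2}{(2i+1)^2}\Bigr), \]
in which the odd-order terms in $p$ cancel, so that the correction factors carry only even powers of $p$.

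Cubing this and summing then expresses the left-hand side of \eqref{wt6}, modulo $p^3$, as
\[ \sum_{k=0}^{(p-1)/2}(-1)^k\binom{(p-1)/2}{k}^3\binom{(p-1)/2+k}{k}^3\Bigl(1+3p^2\sum_{i=0}^{k-1}\frac{1}{(2i+1)^2}\Bigr). \]
The leading part is the value $C(\tfrac{p-1}{2})$ of the weight-$6$ Apéry-like sequence $C(n)=\sum_k(-1)^k\binom{n}{k}^3\binom{n+k}{k}^3$, so the theorem reduces to proving two statements: the modular Apéry-like link $b(p)\equiv C(\tfrac{p-1}{2})\pmod{p^3}$, in the spirit of Ahlgren--Ono, and the vanishing of the correction, i.e.
\[ \sum_{k=0}^{(p-1)/2}(-1)^k\binom{(p-1)/2}{k}^3\binom{(p-1)/2+k}{k}^3\sum_{i=0}^{k-1}\frac{1}{(2i+1)^2}\equiv0\pmod p. \]

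The harmonic-sum congruence is the heart of the matter, and it is here that I would bring in the two novel ingredients advertised in the abstract. The sequence $C(n)$, like the Apéry numbers $A(n)$, serves as a denominator sequence for a family of rational approximations to $\zeta(3)$; comparing these approximations with Apéry's classical ones produces exact harmonic-sum identities relating the weighted sums $\sum_k(-1)^k\binom{n}{k}^3\binom{n+k}{k}^3(\cdots)$ to their Apéry counterparts. Feeding a suitable congruence between $A(n)$ and $C(n)$ into these identities should reduce the displayed correction to a harmonic sum that is manifestly divisible by $p$, completing the argument.

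I expect the main obstacle to be this harmonic-sum congruence, and in particular controlling it to the full modulus $p^3$ rather than merely $p^2$. The weighted sums are an entangled combination of odd-indexed harmonic sums against the Apéry-like coefficients, and the only transparent route to the required cancellation seems to run through the weight-three $\zeta(3)$ data; establishing the exact identities from the comparison of the two approximation schemes, and checking that they supply the extra factor of $p$, is the delicate core. Securing the modular link $b(p)\equiv C(\tfrac{p-1}{2})\pmod{p^3}$ to the \emph{same} precision is a secondary but substantial difficulty, parallel to the weight-$4$ case but heavier.
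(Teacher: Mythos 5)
Your opening reduction is correct as far as it goes: with $m=(p-1)/2$ one indeed has $\binom{m}{k}\binom{m+k}{k}=(-1)^k\frac{(1/2)_k^2}{k!^2}\prod_{i=0}^{k-1}\bigl(1-\frac{p^2}{(2i+1)^2}\bigr)$, the terms with $k>m$ drop out, and the truncated ${}_6F_5$ is congruent modulo $p^3$ to $C(m)+3p^2S$, where $C(n)=\sum_k(-1)^k\binom{n}{k}^3\binom{n+k}{k}^3$ and $S=\sum_{k=0}^m(-1)^k\binom{m}{k}^3\binom{m+k}{k}^3\sum_{i=0}^{k-1}(2i+1)^{-2}$. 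But the two statements you then reduce the theorem to are \emph{both false}. Already for $p=5$: $C(2)=1-216+216=1$, whereas $b(5)=-74\equiv 51\pmod{125}$, so $b(p)\not\equiv C(m)\pmod{p^3}$; correspondingly $S=24\not\equiv0\pmod 5$, so the ``correction'' contributes a genuine nonzero multiple of $p^2$ and cannot be discarded. No comparison of rational approximations to $\zeta(3)$ will prove the false congruence $S\equiv0\pmod p$, so the plan collapses at its first branch point. The weight-$4$ precedent you invoke is also misremembered: Ahlgren--Ono proved $A(\frac{p-1}{2})\equiv a(p)$ only modulo $p^2$ (e.g.\ $A(1)=5$ while $a(3)=-4$, and $5\not\equiv-4\pmod{27}$), and Kilbourn's proof of \eqref{wt4} does not factor through a mod-$p^3$ congruence with $A(m)$.

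The missing mechanism is the bridge from $b(p)$ to finite sums, and it is not ``in the spirit of Ahlgren--Ono'' in the way you describe. The paper starts from the Frechette--Ono--Papanikolas trace formula $b(p)=-p^5\,{}_6F_5(1)+p^4\,{}_4F_3(1)+(1-\phi_p(-1))p^2$ for the \emph{Gaussian} (finite-field) hypergeometric functions, and then applies the mod-$p^3$ expansion of Theorem~\ref{osmain} with $\ell=2,3$, which writes these in terms of harmonic-number-weighted binomial sums $X_\ell$, $Y_\ell$, $Z_\ell$. Since $Z_3(p,1)$ is exactly your restricted truncation, the theorem becomes the two congruences $X_3(p,1)-Y_2(p,1)\equiv(-1)^m-1\pmod p$ and $Y_3(p,1)\equiv Z_2(p,1)\pmod{p^2}$ (Lemmas~\ref{lem1} and~\ref{lem2}); the $p^2$-term you tried to kill is cancelled against these $X$- and $Y$-contributions rather than vanishing on its own. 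The $\zeta(3)$ ingredients (Nesterenko's partial-fraction identities \eqref{id1}, \eqref{Res1}, \eqref{Res3} and Bailey's transformation \eqref{BR}) and the Ap\'ery-like congruence $A(m)\equiv C_6(m)\pmod{p^2}$ of Lemma~\ref{lem:AC} enter only at that stage, to prove those two lemmas --- not to establish a direct link $b(p)\equiv C(m)\pmod{p^3}$, which, as the computation above shows, does not hold.
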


Theorem~\ref{thm:main} is of particular practical relevance due to Weil's bounds $| b (p) | < 2 p^{5
/ 2}$, which tell us that the values of the truncated sums modulo $p^3$ are
sufficient for reconstructing the Fourier coefficients $b (p)$, and hence the
Hecke eigenform. Mortenson has further observed numerically that \eqref{wt6}
appears to hold modulo $p^5$. The technical difficulties in generalizing our approach to
verify this observation seem considerable. It would therefore be particularly interesting
whether a different approach can be found, which verifies the congruence more naturally.

The paper is organized as follows. In Section~\ref{sec:apery}, we provide
additional historic context, going back to Ap\'ery's proof of the
irrationality of $\zeta (3)$, and introduce Ap\'ery-like sequences. This
also serves to prepare for our proof of Theorem~\ref{thm:main}, which,
interestingly, involves two constructions \cite{Ne96}, \cite{Ri01}, \cite{zudilin-apery30} of rational approximations to
$\zeta (3)$ as well as a congruence between the Ap\'ery numbers and
another Ap\'ery-like sequence. This congruence is proven in Section~\ref{sec:AC}.
In Section~\ref{sec:gaussian}, we briefly review Greene's Gaussian hypergeometric series.
A result of Frechette, Ono and Papanikolas \cite{fop-traces} expresses the Fourier
coefficients $b (p)$ in terms of these finite field analogs of the classical
hypergeometric series. The Gaussian hypergeometric functions that thus arise
have been determined modulo $p^3$ in \cite{os-gaussian} in terms of sums
involving harmonic sums. In Section~\ref{sec:proof}, we reduce the resulting
congruences between sums involving harmonic numbers, then prove Theorem~\ref{thm:main}.
One of the challenging auxiliary congruences is
\begin{align}
  & \sum_{k = 0}^{\frac{p-1}2} (- 1)^k \binom{\frac{p-1}2 + k}{k}^3 \binom{\frac{p-1}2}{k}^3 \bigl(1 + 3 k
  (H_{\frac{p-1}2 + k} + H_{\frac{p-1}2 - k} - 2 H_k)\bigr) \nonumber\\
  &\quad \equiv \sum_{k = 0}^{\frac{p-1}2} \binom{\frac{p-1}2 + k}{k}^2 \binom{\frac{p-1}2}{k}^2 \pmod{p^2}.
  \label{eq:cong:intro}
\end{align}
As usual, $H_n = H_n^{(1)}$, and $H_n^{(r)}$ denote the generalized harmonic numbers
\begin{equation*}
  H_n^{(r)} = \sum_{j = 1}^n \frac{1}{j^r} .
\end{equation*}
The fact that the right-hand side of \eqref{eq:cong:intro} involves the
Ap\'ery numbers and the relation of the latter to the irrationality of
$\zeta (3)$ helped us to apply some ``irrational'' ingredients, in the
form of two different constructions of rational approximations to $\zeta (3)$,
to complete the proof. Finally, in Section~\ref{sec:ab}, we comment on
the need to certify congruences algorithmically.

\section{Historic context and Ap\'ery-like sequences}\label{sec:apery}

The Ap\'ery numbers \cite[A005259]{oeis}
\begin{equation}
  A (n) = \sum_{k = 0}^n \binom{n}{k}^2 \binom{n + k}{k}^2 \label{eq:A}
\end{equation}
rose to prominence by Ap\'ery's proof \cite{apery} of the irrationality of
$\zeta (3)$ at the end of the 1970s and were studied by number theorists in
the 1980s because of their arithmetic significance. Prominently, for instance,
Beukers conceptualized Ap\'ery's proof by realizing that the ordinary
generating function admits a parametrization by modular forms. Beukers also
established \cite{beukers-apery87} a second relation to modular forms by showing that
\begin{equation}
  A \left(\frac{p - 1}{2} \right) \equiv a (p) \pmod p,
  \label{eq:Aa:modp}
\end{equation}
where $a (n)$ are the Fourier coefficients of the Hecke eigenform \eqref{eta24}.
After some dormancy, the Ap\'ery numbers resurfaced when Ahlgren and Ono \cite{ahlgren-ono-apery}
proved Beukers' conjecture that \eqref{eq:Aa:modp} holds modulo $p^2$. In a
different direction, Beukers and Zagier \cite{zagier4} initiated the
exploration of generalizations, often referred to as Ap\'ery-like sequences,
which also arise as integral solutions to recurrence equations like
\begin{equation}
  (n + 1)^3 A (n + 1) - (2 n + 1) (17 n^2 + 17 n + 5) A (n) + n^3 A (n - 1) =
  0, \label{eq:A:rec}
\end{equation}
which is satisfied by the Ap\'ery numbers $A (n)$ and characterizes them
together with the single initial condition $A (0) = 1$.

In reducing the harmonic sums that we encounter in the proof of
Theorem~\ref{thm:main}, a crucial role is played by the sequence $C_6 (n)$,
\cite[A183204]{oeis}, where
\begin{equation}
  C_{\ell} (n) = \sum_{k = 0}^n \binom{n}{k}^{\ell} \bigl(1 - \ell k (H_k - H_{n -
  k})\bigr) . \label{eq:C:ell}
\end{equation}
The phenomenon that these sequences are integral for all positive integers
$\ell$ has been proved in \cite[Proposition~1]{kr-hyp}. For $\ell = 1, 2, 3,
4, 5$, these sequences were explicitly evaluated by Paule and Schneider
\cite{ps-harm}, who further ask whether $C_{\ell} (n)$ can be expressed as a
single sum of hypergeometric terms for $\ell \geq 6$. It turns out that
$C_6 (n)$ is one of the sporadic Ap\'ery-like sequences discovered in
\cite{cooper-sporadic} (see also \cite{zudilin-bams}), so that, for $\ell = 6$, the question of Paule and
Schneider is answered affirmatively by the following observation.

\begin{proposition}
  \label{prop:C6}The sequence $C_6 (n)$ has the binomial sum representations
  \begin{align*}
    C_6 (n) & = (- 1)^n \sum_{k = 0}^n \binom{n}{k}^2 \binom{n + k}{k}
    \binom{2 k}{n}\\
    & = \sum_{k = 0}^n (- 1)^k \binom{3 n + 1}{n - k} \binom{n + k}{k}^3,
  \end{align*}
  which make the integrality of $C_6 (n)$ transparent.
\end{proposition}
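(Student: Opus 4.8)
The plan is to prove both identities simultaneously by exhibiting a single linear recurrence with polynomial coefficients that is satisfied by all three sequences---the harmonic sum $C_6(n)$ and the two binomial sums on the right---and then to match a few initial values. Since $C_6(n)$ is anticipated to coincide with the sporadic Ap\'ery-like sequence of \cite{cooper-sporadic}, the target is the three-term relation $\mathcal R$ of that sequence, of the same shape as the Ap\'ery recurrence \eqref{eq:A:rec}, namely $p_2(n)\,u(n+1)+p_1(n)\,u(n)+p_0(n)\,u(n-1)=0$ with cubic leading coefficient $p_2(n)$ (the cubes in the second representation signal that $C_6$ lies in the $\zeta(3)$-family). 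Two or three initial values then pin the sequence down uniquely.

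For the two purely binomial sums this is routine creative telescoping. Applying Zeilberger's algorithm to $(-1)^k\binom{3n+1}{n-k}\binom{n+k}{k}^3$ yields $\mathcal R$ with an explicit rational certificate; running it on $\binom{n}{k}^2\binom{n+k}{k}\binom{2k}{n}$ (where $\binom{2k}{n}$ merely restricts the effective range to $k\ge\lceil n/2\rceil$, the summand remaining proper hypergeometric) and checking that the output reduces to the same operator $\mathcal R$, followed by comparison of the first few values, identifies the two right-hand sides with each other.

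The crux is $C_6(n)$, to which Zeilberger's algorithm does not apply directly because of the harmonic numbers. I would reach it by differentiating a parametric hypergeometric identity. Introduce the two-parameter deformation
\[
\Phi(n;x,t)=\sum_{k=0}^n\left(\frac{\Gamma(n+1)}{\Gamma(k+1+x)\,\Gamma(n-k+1-x)}\right)^{6}t^k,
\]
whose summand is proper hypergeometric in $(n,k)$ with parameters $(x,t)$. A short digamma computation gives $\partial_x\Phi|_{x=0}=6\sum_k\binom{n}{k}^6(H_{n-k}-H_k)\,t^k$, and the operator $t\partial_t$ supplies the missing factor $k$, so that
\[
C_6(n)=\bigl[(1+t\partial_t\,\partial_x)\,\Phi(n;x,t)\bigr]_{x=0,\;t=1}.
\]
Running Zeilberger's algorithm on $\Phi$ produces a recurrence in $n$ with coefficients polynomial in $n$ and rational in $(x,t)$; applying $(1+t\partial_t\,\partial_x)$ and specializing to $(x,t)=(0,1)$ transfers this relation to $C_6(n)$. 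If the base recurrence specializes at $(0,1)$ to $\mathcal R$ and the derivative contributions from its coefficients collapse, one concludes that $C_6(n)$ also satisfies $\mathcal R$, and matching initial values finishes the proof.

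The main obstacle is exactly this last transfer: one must control the parametric certificate and the telescoping boundary terms uniformly near $(x,t)=(0,1)$, and guarantee that differentiating the (possibly higher-order) recurrence for $\Phi$ leaves no genuinely inhomogeneous or higher-order debris once restricted to the special point. In practice this is cleanest to certify with symbolic-summation software for harmonic sums (for instance, creative telescoping in an Ore algebra extended by the $H_k$), which verifies directly that $C_6(n)$ obeys $\mathcal R$; the parametric picture above is what explains why such a certificate must exist.
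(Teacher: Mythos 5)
Your proposal is correct and follows essentially the same route as the paper: all three sequences are shown to satisfy one and the same three-term recurrence of the shape \eqref{eq:A:rec} (recorded in \cite{ps-harm} and \cite{cooper-sporadic}, or produced by creative telescoping, with symbolic summation in the style of \cite{schneider-sigma07} certifying the harmonic-number sum), after which a comparison of initial values completes the identification. Your parametric deformation $\Phi(n;x,t)$ is a pleasant explanation of why such a certificate must exist, but it is not needed beyond what the certified recurrences already provide.
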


That all three sums are equal can be verified by checking that each sequence
satisfies the same three-term recursion (a variation of \eqref{eq:A:rec}).
These are recorded in \cite{ps-harm} and \cite{cooper-sporadic}, or can be
automatically derived by an algorithm such as creative telescoping. An
expression for $C_6 (n)$ as a variation of the first of the sums in
Proposition~\ref{prop:C6}, and hence the answer to the question of Paule and
Schneider, for $\ell = 6$, was already observed in \cite[Entry~17 in
Table~2]{cdd-harm}. No single-sum hypergeometric expressions for $C_\ell(n)$ are known when $\ell \geq 7$.

The following unexpected congruence between the Ap\'ery numbers $A (n)$ and
the Ap\'ery-like numbers $C_6 (n)$, from \eqref{eq:A} and \eqref{eq:C:ell},
is another ingredient in our proof of Theorem~\ref{thm:main}. It is proved in
Section~\ref{sec:AC}.

\begin{lemma}
  \label{lem:AC}For all odd primes $p$,
  \begin{equation}
    A \left(\frac{p - 1}{2} \right) \equiv C_6 \left(\frac{p - 1}{2} \right)
    \pmod{p^2} . \label{eq:AC}
  \end{equation}
\end{lemma}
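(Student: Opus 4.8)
The plan is to set $m=(p-1)/2$ and to reduce both $A(m)$ and $C_6(m)$ modulo $p^2$ to one and the same explicit sum of fourth powers of central binomial coefficients, so that \eqref{eq:AC} collapses to a single harmonic-sum identity modulo $p$. The engine for both reductions is the factorization, valid at $n=m$, of the product $\binom{m}{k}\binom{m+k}{k}=\frac{1}{(k!)^2}\prod_{j=1}^{k}(m-j+1)(m+j)$ together with $(m-j+1)(m+j)=\tfrac14\bigl(p^2-(2j-1)^2\bigr)$, which gives $\binom{m}{k}\binom{m+k}{k}=\frac{(-1)^k}{4^k(k!)^2}\prod_{j=1}^{k}\bigl((2j-1)^2-p^2\bigr)$.

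For the Ap\'ery side this is immediate: each factor is $\equiv(2j-1)^2\pmod{p^2}$ and the whole expression is $p$-integral, so after squaring the $p^2$-corrections drop out and
\[ A(m)=\sum_{k=0}^{m}\binom{m}{k}^2\binom{m+k}{k}^2\equiv\sum_{k=0}^{m}\frac{\binom{2k}{k}^4}{256^k}\pmod{p^2}. \]
For the Ap\'ery-like side I would use the second representation of Proposition~\ref{prop:C6}, namely $C_6(m)=\sum_{k}(-1)^k\binom{3m+1}{m-k}\binom{m+k}{k}^3$, since there the single cube is easy to expand; here the binomials carry genuine $O(p)$ corrections that must be tracked. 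Writing $O_k=\sum_{j=1}^{k}\frac{1}{2j-1}$, the same factorizations give $\binom{m+k}{k}\equiv\frac{\binom{2k}{k}}{4^k}(1+pO_k)$ and $\binom{m}{k}\equiv(-1)^k\frac{\binom{2k}{k}}{4^k}(1-pO_k)\pmod{p^2}$, while $\binom{3m+1}{m-k}=\binom{p+m}{p+k}=\binom{m}{k}\prod_{i=k+1}^{m}(1+p/i)\equiv\binom{m}{k}\bigl(1+p(H_m-H_k)\bigr)\pmod{p^2}$, the point being that $\binom{3m+1}{m-k}$ is a $p$-adic unit (no carries occur in the relevant base-$p$ addition, by Kummer's theorem). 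Multiplying the three expansions, the signs cancel and the linear-in-$p$ terms combine to $H_m-H_k+2O_k$, yielding
\[ C_6(m)\equiv\sum_{k=0}^{m}\frac{\binom{2k}{k}^4}{256^k}\bigl(1+p(H_m-H_k+2O_k)\bigr)\pmod{p^2}. \]

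Subtracting the two reductions, Lemma~\ref{lem:AC} becomes equivalent to the purely harmonic congruence
\[ \sum_{k=0}^{(p-1)/2}\frac{\binom{2k}{k}^4}{256^k}\bigl(H_m-H_k+2O_k\bigr)\equiv 0\pmod p. \]
To simplify it I would use $2O_k-H_k=\sum_{j=1}^{k}\frac{1}{j(2j-1)}$, the classical evaluation $H_m\equiv-2q_2\pmod p$ with $q_2=(2^{p-1}-1)/p$ the Fermat quotient, and the reflection $\frac{\binom{2k}{k}^4}{256^k}\equiv\binom{m}{k}^4\pmod p$ coming from $\binom{m}{k}\equiv(-1)^k\binom{2k}{k}/4^k$.

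I expect this final mod $p$ identity to be the main obstacle. The plain sum $\sum_k\binom{2k}{k}^4/256^k$ is already $\equiv A(m)\equiv a(p)\pmod p$ by Beukers \eqref{eq:Aa:modp}, so what is genuinely needed is the harmonic-weighted companion $\sum_k\frac{\binom{2k}{k}^4}{256^k}(2O_k-H_k)\pmod p$. I would attack this either by creative telescoping or the WZ method applied to the truncated sum---which is natural, since the weight $H_m-H_k+2O_k$ is exactly the parameter derivative $\frac{\partial}{\partial a}-\frac{\partial}{\partial b}$ at $a=\tfrac12,\ b=1$ of the summand of a ${}_4F_3$ evaluated at $1$, so that the congruence is the $p$-adic shadow of a contiguous/derivative hypergeometric identity---or, failing a clean certificate, by exploiting the $k\mapsto m-k$ symmetry of $\binom{m}{k}^4$ together with standard mod $p$ evaluations of the partial sums $O_k$ and $H_k$. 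Establishing this harmonic identity is the technical heart of the argument; the two binomial reductions above are essentially mechanical once the $(2j-1)^2-p^2$ factorization and the unit property of $\binom{3m+1}{m-k}$ are in place.
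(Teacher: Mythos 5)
Your reductions of $A(m)$ and $C_6(m)$ are correct and, modulo notation, are exactly the paper's: the paper also works from the second representation $C_6(n)=\sum_k(-1)^k\binom{3n+1}{n-k}\binom{n+k}{k}^3$, expands the three binomial factors to first order in $p$ (its \eqref{bm}, \eqref{bm+k}, \eqref{bm3} are your expansions written with $\binom{m}{k}$ in place of $(-1)^k\binom{2k}{k}/4^k$ and with $H_{m+k}-H_m$ in place of $2O_k$), and compares with the squared expansion for $A(m)$. The one genuine defect is that you stop at the congruence
\[
\sum_{k=0}^{m}\binom{m}{k}^4\bigl(H_m-H_k+2O_k\bigr)\equiv 0\pmod p
\]
and treat it as ``the technical heart,'' proposing creative telescoping, WZ certificates, Fermat quotients, or derivative hypergeometric identities. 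None of that is needed: since $-\tfrac12\equiv m\pmod p$, one has $2O_k=\sum_{j=1}^k\frac{1}{j-1/2}\equiv\sum_{j=1}^k\frac{1}{m+j}=H_{m+k}-H_m\pmod p$, and $H_{m+k}\equiv H_{m-k}\pmod p$ because the terms of $H_{m+k}-H_{m-k}=\sum_{j=m-k+1}^{m+k}1/j$ pair off as $j$ and $p-j$. Hence the weight $H_m-H_k+2O_k\equiv H_{m-k}-H_k\pmod p$, and the sum vanishes identically by the symmetry $k\leftrightarrow m-k$ of $\binom{m}{k}^4$ --- which is precisely the last line of the paper's proof. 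You in fact list this symmetry as a fallback option, so the missing step is a one-line computation with tools you already have on the table; but as written the proof is incomplete, and the heavier machinery you foreground (WZ, $q_2$, contiguous relations) is a detour that would obscure rather than supply the finish.
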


We point out that suitable modular parameterizations of the generating
functions $\sum_{n = 0}^{\infty} A (n) z^n$ and $\sum_{n = 0}^{\infty} C_6(n)
z^n$ convert them into weight $2$ modular forms of level $6$ and $7$,
respectively \cite{beukers-irr} and \cite{cooper-sporadic}. We further note that the
congruence \eqref{eq:AC} is rather trivially complemented by the congruence
\begin{equation*}
  A \left(\frac{p - 1}{2} \right) \equiv D \left(\frac{p - 1}{2} \right)
   \pmod{p},
\end{equation*}
which is straightforward and is only true modulo $p$, where
\begin{equation*}
  D (n) = \sum_{n = 0}^{\infty} \binom{n}{k}^4
\end{equation*}
is another Ap\'ery-like sequence \cite[A005260]{oeis}, associated with a
modular form of weight $2$ and level $10$ (see \cite{cooper-sporadic}).

\section{Another Ap\'ery number congruence}\label{sec:AC}

This section is concerned with proving the congruence \eqref{eq:AC} of
Lemma~\ref{lem:AC} and, thereby, collecting some basic congruences involving
harmonic numbers. The form in which we will later use this congruence is
\begin{equation}
  \sum_{k = 0}^m \binom{m}{k}^2 \binom{m + k}{k}^2 \equiv \sum_{k = 0}^m
  \binom{m}{k}^6 \bigl(1 - 6 k (H_k - H_{m - k})\bigr) \pmod{p^2}.
  \label{eq:AC:H}
\end{equation}
Here, and throughout, $p$ is an odd prime and $m = (p - 1) / 2$. For our proof of the
congruence \eqref{eq:AC:H} it is however crucial to use the alternative representation
\begin{equation*}
  C_6 (n) = \sum_{k = 0}^n (- 1)^k \binom{3 n + 1}{n - k} \binom{n + k}{k}^3
\end{equation*}
for the sequence $C_6 (n)$ provided by Proposition~\ref{prop:C6}.

First, note that
\begin{equation}
  \binom{m+k}{m}
  =\frac{(m+1)_k}{k!}
  =\frac{(\frac12)_k}{k!}\,\biggl(1+\frac p2\sum_{j=0}^{k-1}\frac1{j+\frac12}+O(p^2)\biggr)
  \label{m1}
\end{equation}
and
\begin{equation}
  \binom{m}{k}
  =\frac{(-1)^k(-m)_k}{k!} =(-1)^k\,\frac{(\frac12)_k}{k!}\,\biggl(1-\frac p2\sum_{j=0}^{k-1}\frac1{j+\frac12}+O(p^2)\biggr).
  \label{m2}
\end{equation}
Now, since
\begin{equation*}
  \sum_{j=0}^{k-1}\frac1{j+\frac12}
  =\sum_{j=0}^{k-1}\frac1{j+\frac12+\frac p2}+O(p)
  =\sum_{j=0}^{k-1}\frac1{j+m+1}+O(p)
  =H_{m+k}-H_m + O(p),
\end{equation*}
we can write the expressions \eqref{m1} and \eqref{m2} in the forms
\begin{equation}
  \binom{m}{k}
  =(-1)^k\,\frac{(\frac12)_k}{k!}\,\biggl(1-\frac p2(H_{m+k}-H_m)+O(p^2)\biggr),
  \label{bm}
\end{equation}
and
\begin{align}
  \binom{m+k}{m}
  &=\frac{(\frac12)_k}{k!}\,\biggl(1+\frac p2(H_{m+k}-H_m)+O(p^2)\biggr) \nonumber \\
  &=(-1)^k\binom{m}{k} \biggl(1+\frac p2(H_{m+k}-H_{m} )+O(p^2)\biggr)^2 \nonumber \\
  &=(-1)^k\binom{m}{k} \bigl(1+p(H_{m + k}-H_m )+O(p^2)\bigr). \label{bm+k}
\end{align}

Recall that $2m=p-1$, so that
\begin{equation}
  H_{2m-k}=H_{p-1}-\sum_{j=1}^k\frac1{p-j}=\sum_{j=1}^k\biggl(\frac1j+\frac p{j^2}\biggr)+O(p^2)=H_k+pH_k^{(2)}+O(p^2).
  \label{H2mk}
\end{equation}
By swapping $k$ with $m-k$, we get
\begin{equation}
  H_{m+k}=H_{m-k}+pH_{m-k}^{(2)}+O(p^2),
\label{H2k}
\end{equation}
and, in view of the invariance of $\binom{m}{k}$ under replacing $k$ with $m-k$, we can translate formula \eqref{bm+k} to
\begin{align}
  \binom{2m-k}{m}
  &=(-1)^{m-k}\binom{m}{k} \bigl(1+p(H_{2m-k}-H_m)+O(p^2)\bigr)
  \nonumber\\
  &=(-1)^{m-k}\binom{m}{k} \bigl(1+p(H_k-H_{m})+O(p^2)\bigr),
  \label{bm2}
\end{align}
which will be useful later.

On the other hand,
\begin{align*}
  \binom{3m+1}{k}
  &=\binom{m+p}{k}=(-1)^k\frac{(-m-p)_k}{k!}
  \\
  &=(-1)^k\frac{(-m)_k}{k!}\biggl(1-p\sum_{j=0}^{k-1}\frac1{-m-p+j}+O(p^2)\biggr)
  \\
  &=\binom{m}{k} \bigl(1+p(H_m-H_{m-k})+O(p^2)\bigr),
\end{align*}
so that
\begin{equation}
  \binom{3m+1}{m-k}
  =\binom{m}{k} \bigl(1+p(H_m-H_k)+O(p^2)\bigr).
  \label{bm3}
\end{equation}
It follows from \eqref{bm+k}, \eqref{H2k} and \eqref{bm3} that
\begin{align*}
  \binom{m+k}{m}^2 \binom{m}{k}^2
  &=\binom{m}{k}^4 \bigl(1+p(H_{m-k}-H_m)+O(p^2)\bigr)^2
  \\
  &=\binom{m}{k}^4 \bigl(1+p(2H_{m-k}-2H_m)+O(p^2)\bigr)
\end{align*}
and
\begin{align*}
  &
  (-1)^k\binom{3m+1}{m-k} \binom{m+k}{m}^3
  \\ &\quad
  = \binom{m}{k}^4 \bigl(1+p(H_m-H_k)+O(p^2)\bigr)\bigl(1+p(H_{m-k}-H_m)+O(p^2)\bigr)^3
  \\ &\quad
  = \binom{m}{k}^4 \bigl(1+p(3H_{m-k}-H_k - 2H_m)+O(p^2)\bigr).
\end{align*}
It remains to use the symmetry $k\leftrightarrow m-k$ in the form
\begin{equation*}
  \sum_{k=0}^{m} \binom{m}{k}^4 H_{m-k}
  =\sum_{k=0}^{m} \binom{m}{k}^4H_k
\end{equation*}
to conclude that the desired congruence \eqref{eq:AC} is indeed true modulo $p^2$.

\section{Gaussian hypergeometric series}\label{sec:gaussian}

In the following, we discuss some preliminaries concerning Greene's Gaussian hypergeometric series \cite{g}. Let $\mathbb{F}_{p}$ denote the finite field with $p$ elements. We extend the domain of all characters $\chi$ of $\mathbb{F}^{\times}_{p}$ to $\mathbb{F}_{p}$ by defining $\chi(0)=0$. For characters $A$ and $B$ of $\mathbb{F}^{\times}_{p}$, define
\begin{equation*}
  \binom{A}{B} = \frac{B(-1)}{p} J(A, \bar{B}),
\end{equation*}
where $J(\chi, \lambda)$  denotes the Jacobi sum for $\chi$ and $\lambda$ characters of $\mathbb{F}^{\times}_{p}$. For characters $A_0,A_1,\dotsc, A_n$ and $B_1, \dotsc, B_n$ of $\mathbb{F}^{\times}_{p}$ and $x \in \mathbb{F}_{p}$, define the Gaussian hypergeometric series by
\begin{equation*}
  {_{n+1}F_n} \biggl( \begin{matrix} A_0,\, A_1,\, \dots,\, A_n \\
  B_1,\, \dots,\, B_n \end{matrix} \biggm| x \biggr)_{p}
  = \frac{p}{p-1} \sum_{\chi} \binom{A_0 \chi}{\chi} \binom{A_1 \chi}{B_1 \chi}
  \dotsm \binom{A_n \chi}{B_n \chi} \chi(x),
\end{equation*}
where the summation is over all characters $\chi$ on $\mathbb{F}^{\times}_{p}$.

We consider the case where $A_i = \phi_p$, the quadratic character, for all $i$, and $B_j= \epsilon_p$, the trivial character mod $p$, for all $j$, and write
\begin{equation*}
  {_{n+1}F_n}(x) =
  {_{n+1}F_n} \biggl( \begin{matrix} \phi_p,\, \phi_p,\, \dots,\, \phi_p \\
  \epsilon_p,\, \dots,\, \epsilon_p \end{matrix} \biggm| x \biggr)_{p}
\end{equation*}
for brevity.  By \cite{g}, $p^{n} {}_{n+1}F_{n}(x) \in \mathbb{Z}$.

For $\lambda \in \mathbb{F}_{p}$ and $\ell \geq 2$ an integer, we now define the quantities
\begin{align*}
  X_\ell (p,\lambda)
  &=\lambda^{m} \sum_{k=0}^{m} (-1)^{\ell k} \binom{m+k}{k}^\ell \binom{m}{k}^\ell \bigl(1+4\ell k(H_{m+k}-H_k) \\
  &\qquad +2\ell^2 k^2(H_{m+k}-H_k)^2-\ell k^2(H_{m+k}^{(2)}-H_k^{(2)})\bigr)\lambda^{-k}, \\
  Y_\ell(p,\lambda)
  &=\lambda^{m} \sum_{k=0}^{m} (-1)^{\ell k} \binom{m+k}{k}^\ell \binom{m}{k}^\ell \bigl(1+2\ell k(H_{m+k}- H_k)
  \\
  &\qquad -\ell k(H_{m+k}-H_{m-k})\bigr)\lambda^{-kp}, \\
  Z_\ell(p,\lambda)
  &=\lambda^{m} \sum_{k=0}^{m} {\binom{2k}k}^{2\ell}16^{-\ell k}\lambda^{-kp^2}.
\end{align*}
Here, as before, $m=(p-1)/2$.

The main result in \cite{os-gaussian} provides an expression for ${_{2\ell}F_{2\ell - 1}}$ modulo $p^3$. Precisely, we have the following.

\begin{theorem}\label{osmain}
Let $p$ be an odd prime, $\lambda \in \mathbb{F}_{p}$, and $\ell \geq 2$ be an integer. Then,
\begin{equation*}
  p^{2\ell - 1} {_{2\ell}F_{2\ell - 1}} (\lambda) \equiv
  -\bigl(p^2 X_{\ell}(p,\lambda) + pY_{\ell}(p,\lambda) + Z_{\ell}(p,\lambda)\bigr) \pmod {p^3} .
\end{equation*}
\end{theorem}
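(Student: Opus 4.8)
We sketch a plan whose aim is to pass from Greene's finite-field definition to values of the $p$-adic Gamma function via Gauss sums, and then to read off the expansion in powers of $p$ up to $p^3$. First I would collapse the defining character sum: since every upper parameter equals $\phi_p$ and every lower parameter equals $\epsilon_p$, and since $\binom{\phi_p\chi}{\epsilon_p\chi}=\binom{\phi_p\chi}{\chi}$, the definition reduces to
\[
  p^{2\ell-1}\,{_{2\ell}F_{2\ell-1}}(\lambda)=\frac{p^{2\ell}}{p-1}\sum_{\chi}\binom{\phi_p\chi}{\chi}^{2\ell}\chi(\lambda).
\]
Using $\binom{A}{B}=\frac{B(-1)}{p}J(A,\bar B)$ and then $J(A,B)=g(A)g(B)/g(AB)$, where $g$ denotes the Gauss sum, each symbol becomes $\binom{\phi_p\chi}{\chi}=\frac{\chi(-1)}{p}\,g(\phi_p\chi)g(\bar\chi)/g(\phi_p)$. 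As $\chi(-1)^{2\ell}=1$, the factor $p^{2\ell}$ cancels and the summand becomes $\frac{1}{p-1}\bigl(g(\phi_p\chi)g(\bar\chi)/g(\phi_p)\bigr)^{2\ell}\chi(\lambda)$, up to the boundary characters $\chi\in\{\epsilon_p,\phi_p\}$, which I would treat separately.

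Next I would parametrise the characters by the Teichm\"uller character, writing $\chi=\omega^{-k}$, and apply the Gross--Koblitz formula $g(\omega^{-j})=-\pi^{\,j}\,\Gamma_p\bigl(\tfrac{j}{p-1}\bigr)$ with $\pi^{p-1}=-p$. This replaces every Gauss sum by a power of $\pi$ times a value of $\Gamma_p$: the factor $g(\phi_p\chi)=g(\omega^{m-k})$ contributes an argument $p$-adically close to $\tfrac12$, while $g(\bar\chi)=g(\omega^{k})$ contributes one close to an integer. Since $\tfrac{k}{p-1}=-k-kp-kp^2-\cdots$ $p$-adically, after reducing the integer part by the functional equation $\Gamma_p(x+1)=-x\Gamma_p(x)$ the remaining deviation from the base point is proportional to $k$, which is the source of the explicit factors of $k$ and $k^2$ in $Y_\ell$ and $X_\ell$. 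The $\pi$-powers collect into an overall power of $p$, and the prefactor $\tfrac{1}{p-1}=-(1+p+p^2+O(p^3))$ supplies the global minus sign in the statement while redistributing contributions among the three $p$-graded blocks; the factor $\chi(\lambda)=\omega(\lambda)^{-k}$ together with this Frobenius (Dwork-type) redistribution is what attaches the weightings $\lambda^{-k}$, $\lambda^{-kp}$, $\lambda^{-kp^2}$ to $X_\ell$, $Y_\ell$, $Z_\ell$ respectively.

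The analytic core is the second-order Taylor expansion
\[
  \frac{\Gamma_p(x+pz)}{\Gamma_p(x)}=1+pz\,\psi_p(x)+\tfrac12 p^2z^2\bigl(\psi_p(x)^2+\psi_p'(x)\bigr)+O(p^3),
\]
where $\psi_p=\Gamma_p'/\Gamma_p$. Reducing the half-integer and integer arguments exactly as the ordinary-binomial shifts were handled in \eqref{m1}--\eqref{bm+k} and \eqref{H2mk}--\eqref{bm2}, the values $\psi_p$ and $\psi_p'$ produce precisely the combinations $H_{m+k}-H_k$, $H_{m+k}-H_{m-k}$ and $H^{(2)}_{m+k}-H^{(2)}_k$. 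The leading $\Gamma_p$-values assemble into $\bigl((\tfrac12)_k/k!\bigr)^{2\ell}=\binom{2k}{k}^{2\ell}16^{-\ell k}$, giving $Z_\ell$; the first-order terms give the single-harmonic part of $Y_\ell$; and the second-order terms give $X_\ell$, the quadratic cross terms among the $2\ell$ identical factors being the source of the $2\ell^2k^2(H_{m+k}-H_k)^2$ contribution and the diagonal second derivatives the source of $-\ell k^2(H^{(2)}_{m+k}-H^{(2)}_k)$.

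The main obstacle is twofold. First, one must show that the character sum truncates to $0\le k\le m$ modulo $p^3$: for the remaining residues a $\binom{m}{k}$-type quantity (equivalently, $\Gamma_p$ evaluated near a non-positive integer) is divisible by $p$, and one has to verify that all such contributions drop below the $p^3$ threshold. Second, the second-order bookkeeping is delicate, since $2\ell$ almost-identical Gamma factors must be multiplied out and the quadratic cross terms matched coefficient-by-coefficient against the explicit $4\ell,\,2\ell^2,\,-\ell$ in $X_\ell$ and $2\ell,\,-\ell$ in $Y_\ell$, all while keeping the $\pi$-power grading consistent with the $\lambda^{-k},\lambda^{-kp},\lambda^{-kp^2}$ weightings. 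Matching these graded blocks correctly, rather than the individual expansions, is where the computation is most error-prone.
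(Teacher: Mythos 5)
The paper does not prove Theorem~\ref{osmain}: it is imported verbatim from \cite{os-gaussian}, so there is no internal argument to measure your proposal against. That said, your route is essentially the one taken in that reference (and in its predecessors \cite{ahlgren-ono-apery}, \cite{kilbourn-apery06}): collapse Greene's definition to the single character sum $\frac{p^{2\ell}}{p-1}\sum_\chi\binom{\phi_p\chi}{\chi}^{2\ell}\chi(\lambda)$, convert Jacobi sums to Gauss sums, apply Gross--Koblitz, and expand the resulting $p$-adic Gamma quotients to second order in $p$; the Teichm\"uller expansion $\omega(\lambda)^{-k}\equiv\lambda^{-kp^2}\pmod{p^3}$ is indeed what produces the three different exponents on $\lambda$ in $X_\ell$, $Y_\ell$, $Z_\ell$. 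So the strategy is sound and matches the literature.

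As a proof, however, what you have written is a roadmap whose two self-declared ``obstacles'' are precisely where all of the content lies. First, the truncation to $0\le k\le m$ and the disposal of the boundary characters $\chi\in\{\epsilon_p,\phi_p\}$ (where $J(A,B)=g(A)g(B)/g(AB)$ acquires correction terms) must be carried out to the full accuracy $p^3$, not merely observed to gain one power of $p$. Second, and more seriously, the identification of $\psi_p$ and $\psi_p'$ at the relevant points with the combinations $H_{m+k}-H_k$, $H_{m+k}-H_{m-k}$ and $H^{(2)}_{m+k}-H^{(2)}_k$ is itself a family of nontrivial $p$-adic congruences (in \cite{os-gaussian} these occupy separate lemmas on the logarithmic derivatives of $\Gamma_p$); it does not follow from the elementary expansions \eqref{m1}--\eqref{bm2} of Section~\ref{sec:AC}, which concern ordinary Pochhammer symbols rather than $\Gamma_p$. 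In particular, the presence of \emph{both} $H_{m+k}-H_k$ and $H_{m+k}-H_{m-k}$ in $Y_\ell$ (with coefficients $2\ell$ and $-\ell$) reflects two genuinely different reductions of the shifted arguments, and the constants $4\ell$, $2\ell^2$, $-\ell$ in $X_\ell$ can only be certified by multiplying the $2\ell$ Gamma factors through to order $p^2$ and matching the graded blocks. Until that bookkeeping is executed, the proposal establishes only that a congruence of this general shape should exist, not the precise statement.
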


An analogous result holds for the opposite parity, that is, for $_{n+1}F_n$ when $n$ is even.

\section{Two lemmas and the proof of Theorem \ref{thm:main}} \label{sec:proof}

\begin{lemma} \label{lem1} Let $p$ be an odd prime. Then
\begin{equation*}
  X_3(p, 1) - Y_2(p, 1) \equiv (-1)^{(p-1)/2} - 1 \pmod{p}.
\end{equation*}
\end{lemma}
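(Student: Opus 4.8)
The plan is to reduce both $X_3(p,1)$ and $Y_2(p,1)$ modulo $p$ to elementary truncated sums and then compare them. Setting $\lambda=1$, the leading factors $\lambda^m$ and $\lambda^{-k}$, $\lambda^{-kp}$ disappear, so the two quantities become explicit harmonic sums over $k=0,\dots,m$. The crucial first observation is that, working only modulo $p$, every harmonic correction term carries a factor that survives multiplication by the binomial product only when it does not itself vanish modulo $p$; in particular the terms involving $H^{(2)}$ and the squared harmonic differences in $X_3$ should be analyzed for whether they contribute modulo $p$ at all. I would first write out
\begin{align*}
  X_3(p,1) &= \sum_{k=0}^{m} (-1)^{3k}\binom{m+k}{k}^3\binom{m}{k}^3\bigl(1+12k(H_{m+k}-H_k)\\
  &\qquad +18k^2(H_{m+k}-H_k)^2-3k^2(H_{m+k}^{(2)}-H_k^{(2)})\bigr)
\end{align*}
and the corresponding expression for $Y_2(p,1)$, and then reduce the binomial coefficients using \eqref{bm}, \eqref{bm+k}, \eqref{H2k} modulo $p$.

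The key simplification, which I expect to drive the whole argument, is that modulo $p$ one has $\binom{m}{k}\equiv(-1)^k\binom{2k}{k}/4^k$ and $\binom{m+k}{k}\equiv\binom{2k}{k}/4^k$, so that the product $\binom{m+k}{k}^\ell\binom{m}{k}^\ell$ collapses to a power of $\binom{2k}{k}/4^k$ up to sign, and the harmonic differences $H_{m+k}-H_k$ reduce to something controllable modulo $p$. After this reduction the second-order terms (the $k^2$ corrections in $X_3$) should contribute only at order $p$ and hence drop out modulo $p$, leaving both $X_3(p,1)$ and $Y_2(p,1)$ congruent modulo $p$ to sums of the shape $\sum_k \binom{2k}{k}^{\text{power}}16^{-k}(\cdots)$ plus first-order harmonic corrections. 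The difference $X_3(p,1)-Y_2(p,1)$ should then telescope, with most terms cancelling and a boundary or central contribution surviving. I would aim to identify the surviving piece as the single term that produces $(-1)^m=(-1)^{(p-1)/2}$, presumably coming from $k=m$ where $\binom{m+k}{k}=\binom{2m}{m}$ and the symmetry $k\leftrightarrow m-k$ behaves asymmetrically in the two sums.

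The hard part will be bookkeeping the first-order harmonic corrections precisely enough to see the cancellation that isolates $(-1)^{(p-1)/2}-1$. Specifically, $X_3$ contains the harmonic difference $H_{m+k}-H_k$ while $Y_2$ contains both $H_{m+k}-H_k$ and the extra term $-\ell k(H_{m+k}-H_{m-k})$; using \eqref{H2k}, namely $H_{m+k}\equiv H_{m-k}\pmod p$, the $Y_2$ correction $H_{m+k}-H_{m-k}$ vanishes modulo $p$, which should be the mechanism that decouples the two expressions. I would therefore verify that, modulo $p$, $Y_2(p,1)$ reduces to the plain sum $\sum_{k=0}^m \binom{2k}{k}^{4}16^{-2k}$ (the $\ell=2$ analog of the central binomial sum), while $X_3(p,1)$ reduces to the same underlying sum shifted by exactly $1$ when $k=0$ is included, and that the residual difference is governed entirely by whether $(-1)^m$ equals $+1$ or $-1$. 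The main obstacle is confirming that no higher harmonic terms secretly contribute modulo $p$; once that is settled, the comparison of the two reduced sums yields the stated congruence directly. I expect the cleanest route is to treat the $k=0$ term separately (which gives the constant $1$ in each sum) and show the remaining sums agree modulo $p$ up to the sign factor $(-1)^{(p-1)/2}$.
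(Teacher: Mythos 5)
Your plan has two concrete gaps that I do not see how to repair along the lines you describe. First, your claim that the quadratic corrections in $X_3(p,1)$ (the terms $18k^2(H_{m+k}-H_k)^2-3k^2(H_{m+k}^{(2)}-H_k^{(2)})$) ``contribute only at order $p$ and hence drop out modulo $p$'' is unfounded: modulo $p$ one has $H_{m+k}\equiv H_{m-k}$, so $H_{m+k}-H_k\equiv H_{m-k}-H_k$, which is a genuinely nonzero residue for general $k$, and the weighted sum of these quadratic terms is exactly what is needed to produce the value $(-1)^m$. Second, your reduction of $Y_2(p,1)$ is wrong: after the term $-2k(H_{m+k}-H_{m-k})$ vanishes mod $p$, the correction $4k(H_{m+k}-H_k)$ survives, so $Y_2(p,1)$ does \emph{not} reduce to the plain sum $\sum_k\binom{2k}{k}^4 16^{-2k}\equiv\sum_k\binom{m+k}{k}^2\binom{m}{k}^2=A(m)$; that plain sum is the central Ap\'ery number, which is generically not $1$ modulo $p$. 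In fact $Y_2(p,1)=1$ holds as an exact identity, equivalent to \eqref{id1}, and this is a nontrivial hypergeometric identity rather than a consequence of termwise congruences.

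The missing idea is precisely the source of the two evaluations. The paper obtains \eqref{id1} (hence $Y_2(p,1)=1$) and \eqref{Res3} (hence $X_3(p,1)\equiv(-1)^m\pmod p$, after applying $H_{m+k}\equiv H_{m-k}$ and $H_{m-k}^{(2)}\equiv -H_{m+k}^{(2)}$ modulo $p$) from the partial-fraction decompositions of the rational functions $\prod_{j=1}^n(t-j)^2/\prod_{j=0}^n(t+j)^2$ and $\prod_{j=1}^n(t-j)^3/\prod_{j=0}^n(t+j)^3$ combined with the residue sum theorem, i.e.\ the Nesterenko-style construction of rational approximations to $\zeta(3)$. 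Your proposed ``telescoping'' of the difference, with a boundary term at $k=m$ supplying $(-1)^m$, has no identified telescoping structure, and given the two points above the bookkeeping would not close up. If you want an alternative to the residue argument, you would still need closed-form evaluations of the two full harmonic sums (for instance via creative telescoping, as discussed in Section~\ref{sec:ab}); a purely termwise mod-$p$ reduction cannot yield them.
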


\begin{proof}
Consider the rational function
$$
  R(t)=R_n(t)=\frac{\prod_{j=1}^n(t-j)^2}{\prod_{j=0}^n(t+j)^2},
$$
defined for any integer $n\ge0$. Its partial fraction decomposition assumes the form
$$
  R(t)=\sum_{k=0}^n\biggl(\frac{A_k}{(t+k)^2}+\frac{B_k}{t+k}\biggr),
$$
where
$$
  A_k =\bigl(R(t)(t+k)^2\bigr)\big|_{t=-k}={\binom{n+k}k}^2{\binom nk}^2,
$$
and, on considering the logarithmic derivative of $R(t)(t+k)^2$,
\begin{align*}
  B_k
  &=\frac{\d}{\d t}\bigl(R(t)(t+k)^2\bigr)\bigg|_{t=-k} \\
  &=2\bigl(R(t)(t+k)^2\bigr)\Biggl(\sum_{j=1}^n\frac1{t-j}-\sum_{\substack{j=0\\j\ne k}}^n\frac1{t+j}\Biggr)\Bigg|_{t=-k}
  \\
  &=2A_k\,\bigl((H_k-H_{n+k})+(H_k-H_{n-k})\bigr).
\end{align*}
The related partial fraction decomposition
\begin{align*}
  tR(t)&=\sum_{k=0}^n\biggl(\frac{A_kt}{(t+k)^2}+\frac{B_kt}{t+k}\biggr)
  =\sum_{k=0}^n\biggl(\frac{A_k((t+k)-k)}{(t+k)^2}+\frac{B_k((t+k)-k)}{t+k}\biggr)
  \\
  &=\sum_{k=0}^n\biggl(-\frac{kA_k}{(t+k)^2}+\frac{A_k-kB_k}{t+k}+B_k\biggr)
\end{align*}
and the residue sum theorem imply
\begin{align*}
  &
  \sum_{k=0}^n(A_k-kB_k)
  =\sum_{\text{all finite poles}}\operatorname{Res}_{\text{pole}}tR(t)
  =-\operatorname{Res}_{t=\infty}tR(t)
  \\ &\quad
  =\text{coefficient of $s$ in Taylor's $s$-expansion of }\frac1s\,R\Bigl(\frac1s\Bigr)
  \\ &\quad
  =\text{coefficient of $s$ in Taylor's $s$-expansion of }s\,\frac{\prod_{j=1}^m(1-js)^2}{\prod_{j=0}^m(1+js)^2}
  \\ &\quad
  =1=A_0,
\end{align*}
from which $\sum_{k=1}^n (A_k-kB_k)=0$ follows. The resulting identity is then
\begin{equation}
  \sum_{k=0}^n{\binom{n+k}k}^2{\binom nk}^2\bigl(1-2k(2H_k-H_{n+k}-H_{n-k})\bigr)=1,
  \label{id1}
\end{equation}
which played a crucial role in \cite{ahlgren-ono-apery} and \cite{kilbourn-apery06}. Notice that \eqref{id1} implies
\begin{equation}
  Y_2(p,1)=1.
  \label{Y2}
\end{equation}
Equality \eqref{id1} and its derivation above follow the approach of Nesterenko from \cite{Ne96} of proving Ap\'ery's theorem
(see also \cite{zudilin-apery30}).

We can perform a similar analysis for the rational function
$$
  \wt R(t)=\wt R_n(t)=\frac{\prod_{j=1}^n(t-j)^3}{\prod_{j=0}^n(t+j)^3}
  =\sum_{k=0}^n\biggl(\frac{\wt A_k}{(t+k)^3}+\frac{\wt B_k}{(t+k)^2}+\frac{\wt C_k}{t+k}\biggr).
$$
As before, we get
\begin{align*}
  \wt A_k
  &=\bigl(\wt R(t)(t+k)^3\bigr)\big|_{t=-k}=(-1)^{n+k}{\binom{n+k}k}^3{\binom nk}^3,
  \\
  \wt B_k
  &=3\wt A_k\,(2H_k-H_{n+k}-H_{n-k}),
  \\
  \wt C_k
  &=\frac92\wt A_k\,(2H_k-H_{n+k}-H_{n-k})^2-\frac32\wt A_k\,(H_{n+k}^{(2)}-2H_k^{(2)}-H_{n-k}^{(2)})
\end{align*}
and by considering the sum of the residues of the rational functions $R(t)$, $tR(t)$ and $t^2R(t)$, we deduce that
$$
  \sum_{k=0}^n\wt C_k=\sum_{k=0}^n(\wt B_k-k\wt C_k)=0
  \qquad\text{and}\qquad
  \sum_{k=0}^n(\wt A_k-2k\wt B_k+k^2\wt C_k)=1.
$$
We only record the first and last equalities for our future use:
\begin{equation}
  \sum_{k=0}^n(-1)^k{\binom{n+k}k}^3{\binom nk}^3\bigl(3(2H_k-H_{n+k}-H_{n-k})^2-(H_{n+k}^{(2)}-2H_k^{(2)}-H_{n-k}^{(2)})\bigr)
  =0
  \label{Res1}
\end{equation}
and
\begin{multline}
  \sum_{k=0}^n(-1)^k{\binom{n+k}k}^3{\binom nk}^3\bigl(1-6k(2H_k-H_{n+k}-H_{n-k})+\tfrac92k^2(2H_k-H_{n+k}-H_{n-k})^2
  \\
  -\tfrac32k^2(H_{n+k}^{(2)}-2H_k^{(2)}-H_{n-k}^{(2)})\bigr)
  =(-1)^n.
  \label{Res3}
\end{multline}
Recall that, throughout, $m=(p-1)/2$.
Now, taking $n=m$ in \eqref{Res3} and applying $H_{m-k} \equiv H_{m+k}\pmod p$ and $H_{m-k}^{(2)} \equiv -H_{m+k}^{(2)}\pmod p$, we obtain
\begin{align}
  X_3(p,1) = \sum_{k=0}^{m} & (-1)^k \binom{m+k}{k}^3 \binom{m}{k}^3 \bigl(1-12k(H_k-H_{m + k}) \label{key} \\
  & + 18k^2(H_k-H_{m+k})^2 -3k^2(H_{m+k}^{(2)}-H_k^{(2)})\bigr)\equiv(-1)^{m} \pmod p. \nonumber
\end{align}
The result then follows after combining \eqref{Y2} with \eqref{key}.
\end{proof}

\begin{lemma} \label{lem2} Let $p$ be an odd prime. Then
\begin{equation*}
  Y_3(p,1) \equiv Z_2(p,1) \pmod{p^2}.
\end{equation*}
\end{lemma}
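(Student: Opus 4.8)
The plan is to reduce both sides to Apéry-type quantities and to bridge them using the congruence $A(m)\equiv C_6(m)$ of Lemma~\ref{lem:AC}. First I would simplify the two sides at $\lambda=1$, where $\lambda^m=\lambda^{-kp}=1$ and $(-1)^{3k}=(-1)^k$. In $Y_3(p,1)$ the harmonic factor collapses, since $6(H_{m+k}-H_k)-3(H_{m+k}-H_{m-k})=-3(2H_k-H_{m+k}-H_{m-k})$; writing $S_k=2H_k-H_{m+k}-H_{m-k}$ gives
\[
  Y_3(p,1)=\sum_{k=0}^m(-1)^k\binom{m+k}{k}^3\binom{m}{k}^3\bigl(1-3kS_k\bigr),
\]
which is precisely the left-hand side of \eqref{eq:cong:intro}. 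For $Z_2(p,1)$, the identity $\binom{2k}{k}/4^k=(\tfrac12)_k/k!$ turns the summand $\binom{2k}{k}^4 16^{-2k}$ into $\bigl((\tfrac12)_k/k!\bigr)^4$, so $Z_2(p,1)=\sum_{k=0}^m\bigl((\tfrac12)_k/k!\bigr)^4$.

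Next I would show that $Z_2(p,1)\equiv A(m)\pmod{p^2}$. By \eqref{bm} and \eqref{m1} the two $O(p)$ correction factors of $\binom{m}{k}$ and $\binom{m+k}{k}$ are reciprocal to first order, so their product is $1+O(p^2)$; since $(\tfrac12)_k/k!$ is a $p$-adic unit for $k\le m$, this yields the termwise congruence $\binom{m}{k}\binom{m+k}{k}\equiv(-1)^k\bigl((\tfrac12)_k/k!\bigr)^2\pmod{p^2}$. Squaring and summing gives $A(m)=\sum_{k=0}^m\binom{m}{k}^2\binom{m+k}{k}^2\equiv\sum_{k=0}^m\bigl((\tfrac12)_k/k!\bigr)^4=Z_2(p,1)\pmod{p^2}$. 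Thus the lemma is equivalent to the auxiliary congruence \eqref{eq:cong:intro}, namely $Y_3(p,1)\equiv A(m)\pmod{p^2}$.

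To prove \eqref{eq:cong:intro} I would pass to sixth powers of $\binom{m}{k}$. From \eqref{bm+k} one has $(-1)^k\binom{m+k}{k}^3\binom{m}{k}^3=\binom{m}{k}^6\bigl(1+3p(H_{m+k}-H_m)+O(p^2)\bigr)$, while \eqref{H2k} gives $S_k=2(H_k-H_{m-k})-pH_{m-k}^{(2)}+O(p^2)$. Substituting into $Y_3(p,1)$, the leading ($O(1)$) part reproduces the representation $C_6(m)=\sum_{k=0}^m\binom{m}{k}^6\bigl(1-6k(H_k-H_{m-k})\bigr)$ coming from \eqref{eq:C:ell}, and replacing $C_6(m)$ by $A(m)$ via Lemma~\ref{lem:AC} (in the form \eqref{eq:AC:H}) reduces $Y_3(p,1)\equiv A(m)$ to a single congruence modulo $p$ of the shape
\[
  \sum_{k=0}^m\binom{m}{k}^6\Bigl(3kH_{m-k}^{(2)}+3(H_{m-k}-H_m)-18k(H_{m-k}-H_m)(H_k-H_{m-k})\Bigr)\equiv0\pmod p,
\]
which I call $(\star)$, the exact coefficients following from the $p$-order bookkeeping. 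Finally I would attack $(\star)$ by comparing the two rational approximations to $\zeta(3)$: reducing \eqref{id1}, \eqref{Res1} and \eqref{Res3} at $n=m$ modulo $p$ (using $H_{m+k}\equiv H_{m-k}$ and $H_{m+k}^{(2)}\equiv-H_{m-k}^{(2)}$) produces linear relations among the $\binom{m}{k}^6$- and $\binom{m}{k}^4$-weighted harmonic sums, which, together with the symmetry $k\leftrightarrow m-k$ that annihilates antisymmetric sums such as $\sum_k\binom{m}{k}^6(H_k-H_{m-k})$ and with \eqref{Y2}, should pin down $(\star)$.

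The main obstacle is $(\star)$, and specifically the asymmetric term $H_{m-k}-H_m$ (with its product against $H_k-H_{m-k}$), which is not directly present in the residue identities \eqref{id1}, \eqref{Res1}, \eqref{Res3}. Controlling it is exactly the point where the comparison of the quadratic (Apéry) approximation \eqref{id1} with the cubic approximation \eqref{Res3}, bridged by the Apéry–$C_6$ congruence of Lemma~\ref{lem:AC}, becomes indispensable; I expect the bulk of the work to lie in producing the correct linear combination of these reduced identities that clears the asymmetric harmonic contributions.
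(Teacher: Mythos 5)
Your reduction is sound up to $(\star)$, and it tracks the paper's own intermediate steps closely: the rewriting of $Y_3(p,1)$ as the left-hand side of \eqref{eq:cong:intro}, the congruence $Z_2(p,1)\equiv A(m)\pmod{p^2}$ (this is \eqref{ztoa}), the identification of the leading part of the $p$-adic expansion of $Y_3(p,1)$ with $C_6(m)$, and the use of Lemma~\ref{lem:AC} to trade $C_6(m)$ for $A(m)$; your coefficients in $(\star)$ also match the paper's expansion of the summand $c(m,k)$. The gap is in your plan for finishing $(\star)$. The identities \eqref{id1}, \eqref{Res1} and \eqref{Res3} all come from one and the same construction (Nesterenko's rational functions $R_n(t)$, $\wt R_n(t)$ and their multiples by powers of $t$); reduced modulo $p$ at $n=m$ they yield \eqref{Y2} and the symmetric relation \eqref{der2}, but no relation that controls the genuinely asymmetric sums $\sum_k\binom{m}{k}^6(H_{m-k}-H_m)$, $\sum_k\binom{m}{k}^6k(H_k-H_{m-k})(H_{m-k}-H_m)$ and $\sum_k\binom{m}{k}^6kH_{m-k}^{(2)}$ that constitute $(\star)$. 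You correctly single these out as the obstacle, but the two approximations the argument actually compares are not \eqref{id1} versus \eqref{Res3} (same construction, different weights); they are the Nesterenko construction versus the Bailey--Rivoal one.

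The missing ingredient is the identity \eqref{BR}, $A(n)=\frac12\sum_{k}\wh B_k$, coming from Bailey's transformation, which expresses the Ap\'ery numbers through the kernel $\binom{n+k}{n}\binom{2n-k}{n}\binom{n}{k}^4$ together with harmonic numbers. Expanding its summand $b(m,k)$ modulo $p^2$ via \eqref{bm+k} and \eqref{bm2}, one finds that its leading part is again $C_6(m)$, so comparison with \eqref{eq:AC:H} forces the $O(p)$ part to vanish modulo $p$; that is the congruence \eqref{der1}, and it is precisely \eqref{der1} that supplies the asymmetric harmonic relations you are lacking. Indeed, a short computation from the paper's final display shows that your sum in $(\star)$ is congruent modulo $p$ to $\tfrac34$ times the sum in \eqref{der1} plus $\tfrac34 m$ times the sum in \eqref{der2}, so \eqref{Res1} accounts for only the \eqref{der2} half; without \eqref{BR} (or an equivalent second source for \eqref{der1}, which would require a second application of Lemma~\ref{lem:AC} on top of the one you have already spent in reducing to $(\star)$), the linear algebra over the Nesterenko identities cannot close.
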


\begin{proof}
Consider the rational function
\begin{align*}
  \wh R(t)=\wh R_n(t)
  &=\frac{n!^2\,(2t+n)\prod_{j=1}^n(t-j)\cdot\prod_{j=1}^n(t+n+j)}{\prod_{j=0}^n(t+j)^4}
  \\
  &=\sum_{k=0}^n\biggl(\frac{\wh A_k}{(t+k)^4}+\frac{\wh B_k}{(t+k)^3}+\frac{\wh C_k}{(t+k)^2}+\frac{\wh D_k}{t+k}\biggr).
\end{align*}
Then
\begin{align*}
  \wh A_k
  &=(-1)^n((n-k)-k)\binom{n+k}n\binom{2n-k}n \binom{n}{k}^4,
  \\
  \wh B_k
  &=(-1)^n\binom{n+k}n\binom{2n-k}n{\binom nk}^4
  \bigl(2+(n-2k)\bigl(-(H_{n+k}-H_k)
  \\ &\quad
  +(H_{2n-k}-H_{n-k})-4(H_{n-k}-H_k)\bigr)\bigr).
\end{align*}

An important consequence of a hypergeometric transformation due to W.\,N.~Bailey \cite{Ba35}, \cite{Zu16}
(see also \cite{Ri01} and \cite{zudilin-apery30} for the links with rational approximations to $\zeta (3)$)
is the equality
\begin{align}
  A(n)
  =\frac12\sum_{k=0}^n\wh B_k
  &=\frac{(-1)^n}2\sum_{k=0}^n\binom{n+k}n\binom{2n-k}n{\binom nk}^4
  \nonumber\\ &\quad\times
  \bigl(2+(n-2k)(5H_k-5H_{n-k}-H_{n+k}+H_{2n-k})\bigr).
  \label{BR}
\end{align}
Now, take $n=m$ (recall that $m=(p-1)/2$) and let $b(m, k)$ denote the summand in \eqref{BR}. Note that $b(m,k)=b(m,m-k)$ and substituting of \eqref{bm+k} and \eqref{bm2} implies that
\begin{align}
  b(m,k)
  &=\binom{m}{k}^6 \bigl(1+p(H_{m+k}-H_m)+O(p^2)\bigr)
  \bigl(1+p(H_k-H_m)+O(p^2)\bigr)
  \nonumber\\ &\quad\times
  \bigl(2+(m-2k)(5H_k-5H_{m-k}-H_{m+k}+H_{2m-k})\bigr)
  \nonumber\\
  &=\binom{m}{k}^6 \bigl(1+p(H_k+H_{m-k}-2H_m)+O(p^2)\bigr)
  \nonumber\\ &\quad\times
  \bigl(2+(m-2k)\bigl(6H_k-6H_{m-k}+pH_k^{(2)}-pH_{m-k}^{(2)}+O(p^2)\bigr)\bigr)
  \nonumber\\
  &=\binom{m}{k}^6 \bigl(2+6(m-2k)(H_k-H_{m-k}) +2p(H_k+H_{m-k} -2H_m)  \nonumber \\
  &\qquad +6p(m-2k)(H_k^2-H_{m-k}^2) -12p (m-2k)(H_k-H_{m-k})H_{m} \nonumber \\
  &\qquad +p(m-2k)(H_k^{(2)}-H_{m-k}^{(2)}) +O(p^2)\bigr).
  \label{bexp}
\end{align}

Moreover, it follows from the symmetry $k \leftrightarrow m - k$ in the form
$$
  \sum_{k=0}^{m} \binom{m}{k}^6 H_{m} = \sum_{k=0}^{m} \binom{m}{k}^6 H_{m - k}
$$
as well as Lemma \ref{lem:AC}, \eqref{eq:AC:H} and \eqref{BR} that
\begin{align*}
  & \sum_{k=0}^{m} \binom{m}{k}^6 \bigl(1+3(m-2k)(H_k-H_{m-k})\bigr) \\
  &\quad =\sum_{k=0}^{m} \binom{m}{k}^6 \bigl(1-6k(H_k-H_{m-k})\bigr)
  \equiv\frac12\sum_{k=0}^{m} b(m,k)\pmod{p^2}.
\end{align*}
Substitution of the expansion \eqref{bexp} into the latter congruence results, after simplifications, in
\begin{multline}
  \sum_{k=0}^{m} \binom{m}{k}^6 \bigl(2(H_k+H_{m-k}-2H_{m})
  +6(m-2k)(H_k^2-H_{m-k}^2)
  \\
  -12(m-2k)(H_k-H_{m-k})H_{m}
  + (m-2k)(H_k^{(2)}-H_{m-k}^{(2)})\bigr)
  \equiv0\pmod p.
  \label{der1}
\end{multline}
From a different source, namely, from the equality \eqref{Res1} applied with $n=m$ and reduced modulo $p$, we obtain
\begin{equation}
  \sum_{k=0}^{m} \binom{m}{k}^6 \bigl(6(H_k-H_{m-k})^2+(H_k^{(2)}+H_{m-k}^{(2)})\bigr)
  \equiv0\pmod p.
  \label{der2}
\end{equation}
Furthermore, denote
$$
  c(m,k)=(-1)^k \binom{m+k}{k}^3 \binom{m}{k}^3 \bigl(1+3k(H_{m+k}+H_{m-k}-2H_k)\bigr),
$$
the summand of $Y_3(p,1)$. Then, with the help of \eqref{bm+k}, we obtain
\begin{align*}
  c(m,k)
  &=\binom{m}{k}^6 \bigl(1+p(H_{m+k}-H_m)+O(p^2)\bigr)^3 \nonumber \\
  &\quad \times \bigl(1+3k(H_{m+k}+H_{m-k} -2H_k)\bigr) \nonumber \\
  &=\binom{m}{k}^6 \bigl(1-6k(H_k-H_{m-k})+3p(H_{m-k}-H_m)
  \\ &\qquad
  -18pk(H_k-H_{m-k})(H_{m-k}-H_{m})+3pkH_{m-k}^{(2)}+O(p^2)\bigr)
\end{align*}
and thus
\begin{equation} \label{cc}
  \sum_{k=0}^{m} c(m,k)
  =\sum_{k=0}^{m} \wt c(m,k),
\end{equation}
where
\begin{align}
  \wt c(m,k)
  &=\frac{c(m,k)+c(m,m-k)}{2} \nonumber
  \\
  &=\binom{m}{k}^6 \bigl(1+3(m-2k)(H_k-H_{m-k}) \nonumber \\
  &\qquad +\tfrac32p(H_k+H_{m-k}-2H_{m}) -9pmH_k H_{m-k} \nonumber \\
  &\qquad -9p (m-2k)(H_k - H_{m-k})H_{m} +9p (m-k)H_k^2 \nonumber \\
  &\qquad +9pkH_{m-k}^2+\tfrac32p (m-k) H_k^{(2)}+\tfrac32pk H_{m-k}^{(2)}+O(p^2)\bigr). \label{c+c}
\end{align}
Finally, from \eqref{m1} and \eqref{m2}, we have
$$
  {\binom{2k}k}^2 2^{-4k} =\frac{(1/2)_k^2}{k!^2} \equiv(-1)^k \binom{m+k}{m} \binom{m}{k} \pmod{p^2},
$$
and so
\begin{equation} \label{ztoa}
  Z_2(p,1) \equiv A(m) \pmod{p^2}.
\end{equation}
Therefore, by \eqref{BR}, \eqref{bexp} and \eqref{cc}--\eqref{ztoa},
\begin{align*}
  Y_3(p,1) - Z_2(p,1)
  &= \sum_{k=0}^{m} c(m,k)  - \frac12 \sum_{k=0}^{m} b(m,k) \\
  &= \frac p2 \sum_{k=0}^{m} \binom{m}{k}^6 \bigl( (H_k+H_{m-k}-2H_{m}) - 18m H_k H_{m-k} \\
  &\qquad - 6(m-2k)(H_k - H_{m - k})H_{m} + (2m-k)(6H_k^2+H_k^{(2)}) \\
  &\qquad + (m+k)(6H_{m-k}^2+H_{m-k}^{(2)}) \bigr)+O(p^2).
\end{align*}
The latter sum is seen to be half of the sum in~\eqref{der1} plus $\frac32 m$ times the sum in~\eqref{der2}.
Thus, the result follows.
\end{proof}

We now prove our main result.

\begin{proof}[Proof of Theorem \ref{thm:main}]
It was conjectured by Koike and proven by Frechette, Ono and Papanikolas that the Fourier coefficients $b(p)$ of \eqref{eta144} can be represented in terms of Gaussian hypergeometric series. Specifically, we have (see Corollary~1.6 in \cite{fop-traces})
\begin{equation*}
  b(p) = - p^5 {}_6 F_5 (1) + p^4 {}_4 F_3 (1) + \bigl(1 - \phi_p(- 1)\bigr) p^2.
\end{equation*}
We now apply Theorem \ref{osmain} with $\ell=2$ and $\ell=3$, respectively, and simplify to obtain
\begin{align*}
  b(p) & \equiv p^2 \bigl(X_3(p,1) - Y_2(p,1) + 1 - (-1)^{(p-1)/2} \bigr) \\
  &\qquad + p \bigl( Y_3(p,1) - Z_2(p,1) \bigr)
  + Z_3(p,1) \pmod{p^3}.
\end{align*}
As
\begin{equation*}
  Z _3(p, 1) = \sum_{n = 0}^{(p-1)/2} \frac{(1/2)_n^6}{n!^6} \equiv \sum_{n = 0}^{p - 1} \frac{(1/2)_n^6}{n!^6} \pmod{p^6},
\end{equation*}
since the summands for $(p-1)/2 < n \leq p - 1$ are divisible by $p^6$, the result follows from Lemmas \ref{lem1} and \ref{lem2}.
\end{proof}

\section{$A\equiv B$ wanted}\label{sec:ab}

At the time of Ap\'ery's proof it was by no means trivial to verify
identities $A = B$ like the ones in Proposition~\ref{prop:C6} by verifying
that both sides, $A$ and $B$, satisfy the same recurrence. For instance, van
der Poorten's beautiful article \cite{vanderpoorten1} describes the
difficulty in checking Ap\'ery's claim that the Ap\'ery numbers $A (n)$
satisfy the recurrence \eqref{eq:A:rec}, and principally attributes to Cohen
and Zagier the clever insight to prove the claim using creative telescoping.
Since then, Wilf and Zeilberger, with subsequent support by many others, have
developed creative telescoping into a pillar of a rich computer algebraic
theory devoted to automatically proving identities between, for instance,
holonomic functions and sequences. We refer to \cite{aeqb} for a superb
introduction to these ideas. Among the more recent developments is Schneider's
work \cite{schneider-sigma07}, which extends the scope from holonomic
sequences to a class of sequences that also includes nested sums of terms
involving harmonic numbers. For instance, using Schneider's computer algebra
package SIGMA, it is routine to verify that, for all integers $n \geq 0$,
\begin{equation*}
  \sum_{k = 0}^n \binom{n}{k}^2 \binom{n + k}{k}^2 \bigl(1 - 2 k (2 H_k - H_{n +
   k} - H_{n - k})\bigr) = 1,
\end{equation*}
which we derived earlier as \eqref{id1} and which played a crucial role in
Ahlgren and Ono's proof \cite{ahlgren-ono-apery} of Beuker's conjecture as
well as Kilbourn's proof \cite{kilbourn-apery06} of the supercongruence
\eqref{wt4}.

Building on these ideas, proving our main result \eqref{wt6} modulo $p^2$,
instead of $p^3$, is much more straightforward as this corresponds to
verifying Lemma~\ref{lem2} modulo $p$ only, a task that can be performed in
many different ways (for example, using Kilbourn's strategy from
\cite[Section 4]{kilbourn-apery06}). Working modulo higher powers of $p$ is
considerably more difficult. In the course of the derivation of Theorem
\ref{thm:main} we encountered several technical difficulties that were finally
resolved by an intelligent cast of hypergeometric identities. Specifically, in
order to compute the congruence \eqref{wt6} we required the identities of
Proposition~\ref{prop:C6} as well as the equalities \eqref{id1}, \eqref{Res1},
\eqref{Res3} and \eqref{BR}, reduced modulo a suitable power of $p$. Note that
all these {\emph{identities}} can, nowadays, be easily resolved by using
computer algebraic techniques like the algorithms from \cite{aeqb} and \cite{schneider-sigma07}
mentioned above. We are, however, very restricted in this production because
certain {\emph{congruences}} (are expected to) remain not derivable this way.
For example, establishing \eqref{wt6} modulo $p^5$ (or even $p^4$) by using
appropriate intermediate identities sounds to us like a real challenge!

There is therefore a natural need for an algorithmic approach to directly
certifying congruences $A \equiv B$, say, when the terms $A$ and $B$ are
holonomic. Specifically, it would be great if such an approach could handle
congruences such as \eqref{eq:cong:intro}, or even just \eqref{eq:AC} in the
form
\begin{equation*}
  \sum_{k = 0}^n \binom{n}{k}^2 \binom{n + k}{k}^2 \equiv (- 1)^n \sum_{k =
   0}^n \binom{n}{k}^2 \binom{n + k}{k} \binom{2 k}{n} \pmod{p^2},
\end{equation*}
where $n = (p - 1) / 2$ and $p$ is an odd prime.

\section*{Acknowledgements}
The first and third authors would like to thank the organizers of the workshop ``Modular forms in String Theory'' (September 26--30, 2016)
at the Banff International Research Station, Alberta (Canada). The three authors thank the Max Planck Institute for Mathematics in Bonn (Germany), where
part of this research was performed. The third author would like to thank Ling Long for several helpful insights on links between finite
and truncated hypergeometric functions.

\end{document}